\newtheorem{theorem}{Theorem}[section]
\newtheorem{definition}[theorem]{Definition}
\title{This is the title}
\begin{document}
\hrule\hrule\hrule\hrule\hrule
\vspace{0.3cm}	
\begin{center}
{\bf{UNEXPECTED  UNCERTAINTY PRINCIPLE FOR DISC BANACH SPACES}}\\
\vspace{0.3cm}
\hrule\hrule\hrule\hrule\hrule
\vspace{0.3cm}
\textbf{K. MAHESH KRISHNA}\\
Post Doctoral Fellow \\
Statistics and Mathematics Unit\\
Indian Statistical Institute, Bangalore Centre\\
Karnataka 560 059, India\\
Email:  kmaheshak@gmail.com\\

Date: \today
\end{center}

\hrule\hrule
\vspace{0.5cm}
\textbf{Abstract}: Let  $(\{f_n\}_{n=1}^\infty, \{\tau_n\}_{n=1}^\infty)$   and $(\{g_n\}_{n=1}^\infty, \{\omega_n\}_{n=1}^\infty)$    be	unbounded continuous p-Schauder frames  ($0<p<1$) for a disc Banach space $\mathcal{X}$. Then for every $x \in ( \mathcal{D}(\theta_f) \cap\mathcal{D}(\theta_g))\setminus\{0\}$,  we show that 
\begin{align}\label{UB}
	\|\theta_f x\|_0\|\theta_g x\|_0 \geq 	\frac{1}{\left(\displaystyle\sup_{n,m \in \mathbb{N} }|f_n(\omega_m)|\right)^p\left(\displaystyle\sup_{n, m \in \mathbb{N}}|g_m(\tau_n)|\right)^p},
\end{align}
where 
\begin{align*}
	&	\theta_f: \mathcal{D}(\theta_f) \ni x \mapsto \theta_fx \coloneqq \{f_n(x)\}_{n=1}^\infty\in \ell^p(\mathbb{N}), \quad 	\theta_g: \mathcal{D}(\theta_g) \ni x \mapsto \theta_gx \coloneqq \{g_n(x)\}_{n=1}^\infty\in \ell^p(\mathbb{N}).	
\end{align*}
Inequality (\ref{UB}) is unexpectedly different from   both  bounded uncertainty principle \textit{[arXiv:2308.00312v1]} and unbounded uncertainty principle \textit{[arXiv:2312.00366v1]} for Banach spaces.

\textbf{Keywords}:   Uncertainty Principle, Frame, Banach space.

\textbf{Mathematics Subject Classification (2020)}: 42C15.\\

\hrule

\hrule
\section{Introduction}
Given a finite collection $\{\tau_j\}_{j=1}^n$ in a finite dimensional Hilbert space $\mathcal{H}$ over $\mathbb{K}$ ($\mathbb{R}$ or $\mathbb{C}$), define
\begin{align*}
	\theta_\tau: \mathcal{H} \ni h \mapsto \theta_\tau h \coloneqq (\langle h, \tau_j\rangle)_{j=1}^n \in \mathbb{K} ^n.
\end{align*}
Recall that  a collection $\{\tau_j\}_{j=1}^n$ in  $\mathcal{H}$ is said to be a Parseval frame \cite{BENEDETTOFICKUS} for $\mathcal{H}$ if 
\begin{align*}
	\|h\|^2=\sum_{j=1}^{n}|\langle h, \tau_j\rangle|^2, \quad \forall h \in \mathcal{H}.
\end{align*}
Most general form of discrete uncertainty principle for finite dimensional Hilbert spaces is the following.
\begin{theorem} (\textbf{Donoho-Stark-Elad-Bruckstein-Ricaud-Torr\'{e}sani Uncertainty Principle}) \cite{DONOHOSTARK, ELADBRUCKSTEIN, RICAUDTORRESANI} \label{RT}
	Let $\{\tau_j\}_{j=1}^n$,  $\{\omega_j\}_{j=1}^n$ be two Parseval frames   for a  finite dimensional Hilbert space $\mathcal{H}$. Then 
\begin{align*}
\frac{\|\theta_\tau h\|_0^2+\|\theta_\omega h\|_0^2}{2}	\geq \left(\frac{\|\theta_\tau h\|_0+\|\theta_\omega h\|_0}{2}\right)^2	\geq \|\theta_\tau h\|_0\|\theta_\omega h\|_0\geq \frac{1}{\displaystyle\max_{1\leq j, k \leq n}|\langle\tau_j, \omega_k \rangle|^2}, \quad \forall h \in \mathcal{H}\setminus \{0\}.
		\end{align*}	
\end{theorem}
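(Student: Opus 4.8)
The plan is to read the displayed chain of inequalities from right to left, since the two leftmost inequalities are purely elementary and all the content lies in the rightmost one.

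First I would dispose of the first two inequalities using the elementary bounds $\frac{a^{2}+b^{2}}{2}\ge\left(\frac{a+b}{2}\right)^{2}$, which is just $(a-b)^{2}\ge 0$, and $\left(\frac{a+b}{2}\right)^{2}\ge ab$, the arithmetic--geometric mean inequality, applied with $a=\|\theta_\tau h\|_0$ and $b=\|\theta_\omega h\|_0$. These require no properties of the frames whatsoever.

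For the decisive inequality, fix $h\in\mathcal H\setminus\{0\}$ and put $A=\{j:\langle h,\tau_j\rangle\ne 0\}$, $B=\{k:\langle h,\omega_k\rangle\ne 0\}$, so that $|A|=\|\theta_\tau h\|_0$ and $|B|=\|\theta_\omega h\|_0$. Using that $\{\tau_j\}_{j=1}^{n}$ is Parseval, I would write the reconstruction $h=\sum_{j\in A}\langle h,\tau_j\rangle\tau_j$, pair it with $\omega_k$ for each $k\in B$, and apply Cauchy--Schwarz over $j\in A$ together with $\sum_{j=1}^{n}|\langle h,\tau_j\rangle|^{2}=\|h\|^{2}$ to get
\begin{align*}
|\langle h,\omega_k\rangle|^{2}\le\Big(\sum_{j\in A}|\langle h,\tau_j\rangle|^{2}\Big)\Big(\sum_{j\in A}|\langle\tau_j,\omega_k\rangle|^{2}\Big)\le \|h\|^{2}\,|A|\,M^{2},\qquad M\coloneqq\max_{1\le j,k\le n}|\langle\tau_j,\omega_k\rangle|.
\end{align*}
Summing over $k\in B$ and invoking the Parseval identity for $\{\omega_k\}_{k=1}^{n}$ (the omitted terms vanish by definition of $B$) gives $\|h\|^{2}\le |A|\,|B|\,M^{2}\,\|h\|^{2}$; dividing by $\|h\|^{2}>0$ yields $\|\theta_\tau h\|_0\|\theta_\omega h\|_0=|A||B|\ge M^{-2}$, which is exactly the claim.

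The only point needing care is that both collections are assumed to be \emph{Parseval} frames, which is precisely what lets us use the reconstruction formula and both norm identities with frame constant $1$ (and also forces $M>0$, so the bound is non-vacuous). Beyond bookkeeping of the supports $A$ and $B$ there is no genuine obstacle: the result is classical, and the argument is a two-line Cauchy--Schwarz estimate sandwiched between the two elementary mean inequalities.
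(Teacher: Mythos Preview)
Your argument is correct and is essentially the classical proof of this uncertainty principle: the two left inequalities are the quadratic--mean/arithmetic--mean and AM--GM bounds, and the rightmost one follows from Parseval reconstruction plus Cauchy--Schwarz over the support, exactly as you wrote. Note, however, that the paper does \emph{not} supply its own proof of this theorem; it is quoted as background with citations to Donoho--Stark, Elad--Bruckstein, and Ricaud--Torr\'{e}sani, so there is nothing in the paper to compare your approach against. Your proof matches the standard one found in those references.
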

Recently, Theorem \ref{RT} has been derived for Banach spaces using  continuous p-Schauder frames.
\begin{definition}\cite{KRISHNA2}\label{PCSF}
	Let 	$(\Omega, \mu)$ be a measure space. Let    $\{\tau_\alpha\}_{\alpha\in \Omega}$ be a collection in a Banach   space $\mathcal{X}$ and     $\{f_\alpha\}_{\alpha\in \Omega}$ be a collection in  $\mathcal{X}^*$. The pair $(\{f_\alpha\}_{\alpha\in \Omega}, \{\tau_\alpha\}_{\alpha\in \Omega})$   is said to be a \textbf{continuous p-Schauder frame}  for $\mathcal{X}$   ($1\leq p<\infty$) if the following holds. 	
	\begin{enumerate}[\upshape(i)]
		\item For every $x\in \mathcal{X}$, the map $\Omega \ni \alpha \mapsto f_\alpha(x)\in \mathbb{K}$ 	is measurable.
		\item For every $x \in \mathcal{X}$, 
		\begin{align*}
			&\|x\|^p=\int\limits_{\Omega}|f_\alpha(x)|^p\, d \mu(\alpha) \quad \text{if} \quad 1\leq p<\infty, \\
		&\|x\|=\operatorname{ess ~ sup}_{\alpha \in \Omega} |f_\alpha (x)| \quad \text{if} \quad p=\infty.
	\end{align*}
		\item For every $x\in \mathcal{X}$, the map 
		$
		\Omega \ni \alpha \mapsto f_\alpha(x)\tau_\alpha \in \mathcal{X}$
		is weakly measurable.
		\item For every $x \in \mathcal{X}$, 
		\begin{align*}
			x=\int\limits_{\Omega}	f_\alpha (x)\tau_\alpha \, d \mu(\alpha),
		\end{align*}  
		where the 	integral is weak integral.
	\end{enumerate}
\end{definition}
Given a continuous p-Schauder frame $(\{f_\alpha\}_{\alpha\in \Omega}, \{\tau_\alpha\}_{\alpha\in \Omega})$  for $\mathcal{X}$, define 
\begin{align*}
	\theta_f: \mathcal{X} \ni x \mapsto \theta_fx \in \mathcal{L}^p(\Omega, \mu); \quad   \theta_fx: \Omega \ni \alpha \mapsto  (\theta_fx) (\alpha)\coloneqq f_\alpha (x) \in \mathbb{K}
\end{align*}
\begin{theorem}(\textbf{Functional Continuous  Donoho-Stark-Elad-Bruckstein-Ricaud-Torr\'{e}sani Uncertainty Principle}) \cite{KRISHNA2, KRISHNA1, KRISHNA3} \label{MT}
	Let $(\Omega, \mu)$,  $(\Delta, \nu)$ be   measure spaces. Let  $(\{f_\alpha\}_{\alpha\in \Omega}$, $ \{\tau_\alpha\}_{\alpha\in \Omega})$  and   $(\{g_\beta\}_{\beta\in \Delta}$, $ \{\omega_\beta\}_{\beta\in \Delta})$   be	continuous p-Schauder frames  for a Banach space $\mathcal{X}$. Then  
	\begin{enumerate}[\upshape(i)]
		\item for $p>1$, we have 
		\begin{align*}
			&\mu(\operatorname{supp}(\theta_f x))^\frac{1}{p}	\nu(\operatorname{supp}(\theta_g x))^\frac{1}{q} \geq 	\frac{1}{\displaystyle\sup_{\alpha \in \Omega, \beta \in \Delta}|f_\alpha(\omega_\beta)|}, \quad \forall x \in \mathcal{X}\setminus\{0\};\\ 	&\nu(\operatorname{supp}(\theta_g x))^\frac{1}{p}	\mu(\operatorname{supp}(\theta_f x))^\frac{1}{q}\geq \frac{1}{\displaystyle\sup_{\alpha \in \Omega , \beta \in \Delta}|g_\beta(\tau_\alpha)|},\quad \forall x \in \mathcal{X}\setminus\{0\}.
		\end{align*}
	where $q$ is the  conjugate index of $p$.
		\item for $p=1$, we have 
		\begin{align*}
			\mu(\operatorname{supp}(\theta_f x)) \geq 	\frac{1}{\displaystyle\sup_{\alpha \in \Omega, \beta \in \Delta}|f_\alpha(\omega_\beta)|}, \quad  	\nu(\operatorname{supp}(\theta_g x))\geq \frac{1}{\displaystyle\sup_{\alpha \in \Omega , \beta \in \Delta}|g_\beta(\tau_\alpha)|}, \quad \forall x \in \mathcal{X}\setminus\{0\}.
		\end{align*}
		\item for $p=\infty$, we have 
	\begin{align*}
		\nu(\operatorname{supp}(\theta_g x)) \geq 	\frac{1}{\displaystyle\sup_{\alpha \in \Omega, \beta \in \Delta}|f_\alpha(\omega_\beta)|}, \quad  	\mu(\operatorname{supp}(\theta_f x))\geq \frac{1}{\displaystyle\sup_{\alpha \in \Omega , \beta \in \Delta}|g_\beta(\tau_\alpha)|}, \quad \forall x \in \mathcal{X}\setminus\{0\}.
	\end{align*}
	\end{enumerate}
\end{theorem}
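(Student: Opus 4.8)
The plan is to obtain all three cases from a single mechanism: reconstruct $x$ by one frame, probe the result with a functional of the other frame, and then play the two frame-norm identities against each other through Hölder's inequality. Throughout, fix $x\in\mathcal{X}\setminus\{0\}$, write $S_f\coloneqq\operatorname{supp}(\theta_f x)$ and $S_g\coloneqq\operatorname{supp}(\theta_g x)$, and abbreviate $K\coloneqq\sup_{\alpha\in\Omega,\beta\in\Delta}|g_\beta(\tau_\alpha)|$ and $L\coloneqq\sup_{\alpha\in\Omega,\beta\in\Delta}|f_\alpha(\omega_\beta)|$. The key point is that $f_\alpha(x)=0$ for $\alpha\notin S_f$ and $g_\beta(x)=0$ for $\beta\notin S_g$, so every integral silently restricts to a support set, and $\|x\|^p=\int_{S_f}|f_\alpha(x)|^p\,d\mu(\alpha)=\int_{S_g}|g_\beta(x)|^p\,d\nu(\beta)$ by Definition \ref{PCSF}(ii).

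First I would fix $\beta\in\Delta$ and apply the functional $g_\beta$ to the $f$-reconstruction $x=\int_\Omega f_\alpha(x)\tau_\alpha\,d\mu(\alpha)$ of Definition \ref{PCSF}(iv). Since this is a weak integral, the defining property of the weak integral lets me pull $g_\beta$ inside, giving the scalar identity $g_\beta\left(\int_\Omega f_\alpha(x)\tau_\alpha\,d\mu(\alpha)\right)=\int_\Omega f_\alpha(x)g_\beta(\tau_\alpha)\,d\mu(\alpha)$. Bounding $|g_\beta(\tau_\alpha)|\le K$ and restricting to $S_f$ yields $|g_\beta(x)|\le K\int_{S_f}|f_\alpha(x)|\,d\mu(\alpha)$. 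For $1<p<\infty$ I then apply Hölder on $S_f$ with conjugate exponents $p,q$: $\int_{S_f}|f_\alpha(x)|\,d\mu(\alpha)\le\left(\int_\Omega|f_\alpha(x)|^p\,d\mu(\alpha)\right)^{1/p}\mu(S_f)^{1/q}=\|x\|\,\mu(S_f)^{1/q}$, so that $|g_\beta(x)|\le K\|x\|\,\mu(S_f)^{1/q}$ for every $\beta$. Raising to the $p$-th power, integrating over $S_g$, and invoking the $g$-frame identity $\|x\|^p=\int_{S_g}|g_\beta(x)|^p\,d\nu(\beta)$ gives $\|x\|^p\le K^p\|x\|^p\mu(S_f)^{p/q}\nu(S_g)$; dividing by $\|x\|^p>0$ and taking $p$-th roots produces $\mu(S_f)^{1/q}\nu(S_g)^{1/p}\ge 1/K$, which is exactly the second inequality in (i). The first inequality is the same argument with the two frames interchanged: reconstruct $x$ by the $g$-frame, probe with $f_\alpha$, and use $|f_\alpha(\omega_\beta)|\le L$ to get $\mu(S_f)^{1/p}\nu(S_g)^{1/q}\ge 1/L$.

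The degenerate cases follow by the same scheme with the Hölder step replaced appropriately. For $p=1$ (so $q=\infty$) one has $\int_{S_f}|f_\alpha(x)|\,d\mu(\alpha)=\|x\|$ directly, hence $|g_\beta(x)|\le K\|x\|$, and integrating over $S_g$ gives $\|x\|\le K\|x\|\nu(S_g)$, i.e. $\nu(S_g)\ge 1/K$; the symmetric choice gives $\mu(S_f)\ge 1/L$, which is (ii). For $p=\infty$ the Hölder estimate is replaced by $\int_{S_f}|f_\alpha(x)|\,d\mu(\alpha)\le\operatorname{ess\,sup}_{\alpha}|f_\alpha(x)|\cdot\mu(S_f)=\|x\|\,\mu(S_f)$, so $|g_\beta(x)|\le K\|x\|\,\mu(S_f)$; taking the essential supremum over $\beta$ and using $\|x\|=\operatorname{ess\,sup}_{\beta}|g_\beta(x)|$ gives $\mu(S_f)\ge 1/K$, and the symmetric choice gives $\nu(S_g)\ge 1/L$, which is (iii). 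In each degenerate case, if the relevant support has infinite measure the asserted bound is trivially satisfied, so only the finite-measure regime needs the computation above.

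The only genuinely delicate point is the commutation $g_\beta\left(\int_\Omega f_\alpha(x)\tau_\alpha\,d\mu(\alpha)\right)=\int_\Omega f_\alpha(x)g_\beta(\tau_\alpha)\,d\mu(\alpha)$. This is precisely the defining property of the weak integral in Definition \ref{PCSF}(iv) together with the weak measurability in Definition \ref{PCSF}(iii), so no extra integrability hypothesis on the frame is required. Everything else reduces to Hölder's inequality and the two frame-norm identities, and the placement of $K$ versus $L$ is dictated solely by which frame is used to reconstruct and which to probe.
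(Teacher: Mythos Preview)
The paper does not actually prove Theorem~\ref{MT}; it is quoted from \cite{KRISHNA2, KRISHNA1, KRISHNA3} as background, so there is no in-paper argument to compare against. That said, your proposal is correct and is precisely the standard mechanism used for such results (and the one the author uses in the analogous proof of Theorem~\ref{DISCUP} in this paper): reconstruct $x$ via one frame, apply a functional from the other frame using the defining property of the weak integral, bound by the coherence constant, apply H\"older (or its degenerate endpoints), and then feed the resulting pointwise bound back into the second frame's norm identity.
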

An unbounded version of \ref{MT} has been recently derived for unbounded frames. 
 \begin{definition}\label{CPSF} \cite{KRISHNA5}
	Let 	$(\Omega, \mu)$ be a measure space and $1\leq p \leq \infty$. Let    $\{\tau_\alpha\}_{\alpha\in \Omega}$ be a collection in a Banach   space $\mathcal{X}$ and     $\{f_\alpha\}_{\alpha\in \Omega}$ be a collection of linear functionals on $\mathcal{X}$ (which may not be bounded). The pair $(\{f_\alpha\}_{\alpha\in \Omega}, \{\tau_\alpha\}_{\alpha\in \Omega})$   is called an  \textbf{unbounded continuous p-Schauder frame} or \textbf{continuous semi p-Schauder frame} for $\mathcal{X}$    if the following conditions  holds. 	
	\begin{enumerate}[\upshape(i)]
		\item For every $x\in \mathcal{X}$, the map $\Omega \ni \alpha \mapsto f_\alpha(x)\in \mathbb{K}$ 	is measurable.
		\item The map 
		\begin{align*}
			\theta_f: \mathcal{D}(\theta_f) \ni x \mapsto \theta_fx \in \mathcal{L}^p(\Omega, \mu); \quad   \theta_fx: \Omega \ni \alpha \mapsto  (\theta_fx) (\alpha)\coloneqq f_\alpha (x) \in \mathbb{K}
		\end{align*}
		is well-defined (need not be bounded).
		\item For every $x\in \mathcal{X}$, the map 
		$
		\Omega \ni \alpha \mapsto f_\alpha(x)\tau_\alpha \in \mathcal{X}$
		is weakly measurable.
		\item For every $x \in \mathcal{D}(\theta_f)$, 
		\begin{align*}
			x=\int\limits_{\Omega}	f_\alpha (x)\tau_\alpha \, d \mu(\alpha),
		\end{align*}  
		where the 	integral is weak integral.
	\end{enumerate}
\end{definition}
\begin{theorem} (\textbf{Unbounded Donoho-Stark-Elad-Bruckstein-Ricaud-Torr\'{e}sani Uncertainty Principles})\label{UUP} \cite{KRISHNA5}
	Let $(\Omega, \mu)$,  $(\Delta, \nu)$ be   measure spaces and $p=1$ or $p=\infty$. Let  $(\{f_\alpha\}_{\alpha\in \Omega}, \{\tau_\alpha\}_{\alpha\in \Omega})$  and   $(\{g_\beta\}_{\beta\in \Delta}, \{\omega_\beta\}_{\beta\in \Delta})$   be	unbounded continuous p-Schauder frames  for a Banach space $\mathcal{X}$. Then for every $x \in (\mathcal{D}(\theta_f)\cap \mathcal{D}(\theta_g))\setminus\{0\}$,  we have 
	\begin{align*}
		\mu(\operatorname{supp}(\theta_f x))\nu(\operatorname{supp}(\theta_g x)) \geq 	\frac{1}{\left(\displaystyle\sup_{\alpha \in \Omega, \beta \in \Delta}|f_\alpha(\omega_\beta)|\right)\left(\displaystyle\sup_{\alpha \in \Omega , \beta \in \Delta}|g_\beta(\tau_\alpha)|\right)}.
	\end{align*}
\end{theorem}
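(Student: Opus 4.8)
The plan is to run a symmetric two-frame coupling argument. The mechanism behind the \emph{product} bound (rather than the single-support bounds obtained in the $p\in\{1,\infty\}$ parts of Theorem \ref{MT}) is that the unbounded frames of Definition \ref{CPSF} carry no norm equality, so the two frames can only be controlled against one another and the coherences enter multiplicatively. Write $M_f \coloneqq \sup_{\alpha\in\Omega,\beta\in\Delta}|f_\alpha(\omega_\beta)|$, $M_g \coloneqq \sup_{\alpha\in\Omega,\beta\in\Delta}|g_\beta(\tau_\alpha)|$, $S_f \coloneqq \operatorname{supp}(\theta_f x)$ and $S_g \coloneqq \operatorname{supp}(\theta_g x)$. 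I would first dispose of the trivial cases: if $\mu(S_f)=\infty$ or $\nu(S_g)=\infty$ the asserted inequality holds automatically, so I may assume both measures are finite; moreover $\theta_f x=0$ (resp. $\theta_g x=0$) would force $x=0$ through the reconstruction in (iv) of Definition \ref{CPSF}, so both $\|\theta_f x\|_\infty$ and $\|\theta_g x\|_\infty$ are strictly positive.

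The core step feeds the reconstruction of one frame into the analysis functionals of the other. Pairing $x=\int_\Omega f_\alpha(x)\tau_\alpha\,d\mu(\alpha)$ with $g_\beta$ gives $g_\beta(x)=\int_\Omega f_\alpha(x)\,g_\beta(\tau_\alpha)\,d\mu(\alpha)$, and bounding the integrand by $M_g$ on $S_f$,
\begin{align*}
 |g_\beta(x)|\le M_g\int_{S_f}|f_\alpha(x)|\,d\mu(\alpha)\le M_g\,\mu(S_f)\,\|\theta_f x\|_\infty,\qquad \beta\in\Delta,
\end{align*}
so that $\|\theta_g x\|_\infty\le M_g\,\mu(S_f)\,\|\theta_f x\|_\infty$. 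For $p=\infty$ the norms $\|\theta_f x\|_\infty$ and $\|\theta_g x\|_\infty$ are finite by the frame membership; for $p=1$ one first reads off $\|\theta_g x\|_\infty\le M_g\|\theta_f x\|_1<\infty$ and $\|\theta_f x\|_\infty\le M_f\|\theta_g x\|_1<\infty$, and then uses $\|\theta_f x\|_1\le\mu(S_f)\|\theta_f x\|_\infty$ to recover the same estimate. The symmetric computation, pairing $x=\int_\Delta g_\beta(x)\omega_\beta\,d\nu(\beta)$ with $f_\alpha$, yields $\|\theta_f x\|_\infty\le M_f\,\nu(S_g)\,\|\theta_g x\|_\infty$.

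Chaining the two bounds gives $\|\theta_g x\|_\infty\le M_gM_f\,\mu(S_f)\,\nu(S_g)\,\|\theta_g x\|_\infty$, and cancelling the factor $\|\theta_g x\|_\infty\in(0,\infty)$ delivers $\mu(S_f)\,\nu(S_g)\ge (M_fM_g)^{-1}$, which is the claim. The one step that is not purely formal --- and where the unbounded hypothesis makes itself felt --- is the commutation $g_\beta(x)=\int_\Omega f_\alpha(x)\,g_\beta(\tau_\alpha)\,d\mu(\alpha)$ together with its symmetric partner: the reconstruction in (iv) is a \emph{weak} integral, so a priori it may be paired only with functionals in $\mathcal X^*$, whereas here $g_\beta$ and $f_\alpha$ are permitted to be unbounded. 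I expect this to be the main obstacle, and I would settle it directly from the weak-integral reconstruction of Definition \ref{CPSF}, which is precisely the hypothesis that furnishes the identity; once it is granted, the H\"older-type bookkeeping above is elementary and runs identically for $p=1$ and $p=\infty$.
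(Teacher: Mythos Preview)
This theorem is quoted in the paper from an external reference and carries no proof in the present paper, so there is no in-paper argument to compare against directly. Structurally, your proposal mirrors the paper's proof of its own main result, Theorem~\ref{DISCUP}: expand $x$ through one frame, evaluate with the functionals of the other, extract a one-sided inequality, do the symmetric computation, multiply the two inequalities and cancel the common nonzero factor. That is exactly the skeleton used there (with $\|\theta_f x\|$ and $\|\theta_g x\|$ playing the role your $\|\theta_f x\|_\infty$, $\|\theta_g x\|_\infty$ play here), so your approach is the expected one.

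On correctness: the bookkeeping for both $p=1$ and $p=\infty$ is fine once the pairing identity $g_\beta(x)=\int_\Omega f_\alpha(x)\,g_\beta(\tau_\alpha)\,d\mu(\alpha)$ (and its symmetric partner) is available. You correctly isolate this as the only nontrivial step, but you do not actually justify it: the reconstruction in Definition~\ref{CPSF}(iv) is a \emph{weak} integral, which by definition guarantees $\phi(x)=\int_\Omega f_\alpha(x)\,\phi(\tau_\alpha)\,d\mu(\alpha)$ only for $\phi\in\mathcal X^*$, whereas $g_\beta$ is explicitly allowed to be unbounded. Saying the hypothesis ``furnishes the identity'' is not an argument --- the hypothesis furnishes it precisely for bounded functionals and for nothing more. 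Since the theorem is imported from another paper, the resolution presumably lives in conventions or additional assumptions stated there; within the present paper you have no tool to close this gap, and you should say so plainly rather than suggest it is routine.
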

Above results cover the important Lebesgue spaces for $1\leq p\leq\infty$. The next natural class of spaces is the Lebesgue spaces for $0<p<1$.  Recall that 	for $0<p<1$, we define
\begin{align*}
	\ell^p(\mathbb{N})\coloneqq \{\{a_n\}_{n=1}^\infty: a_n \in \mathbb{K}, \forall n \in \mathbb{N}, \sum_{n=1}^{\infty}|a_n|^p<\infty\}
\end{align*}
equipped with the inhomogeneous norm 
\begin{align*}
	\|\{a_n\}_{n=1}^\infty\|_p\coloneqq \sum_{n=1}^{\infty}|a_n|^p, \quad \forall \{a_n\}_{n=1}^\infty \in  \ell^p(\mathbb{N}).
\end{align*}
In this paper, we derive a surprising result which is counter intuitive  to the feeling we gain from Theorem \ref{MT}. This is why we called the uncertainty principle we obtained   as unexpected uncertainty principle.

\section{Unexpected  Uncertainty Principle}

We start by recalling the  following definition. 
\begin{definition}\cite{KRISHNA4}
Let $\mathcal{X}$  be a  vector   space over $\mathbb{K}$. We say that 	$\mathcal{X}$ is a \textbf{disc Banach space}   if there exists a map called as \textbf{disc norm} $\|\cdot\|:\mathcal{X} \to [0, \infty)$ satisfying the following conditions.
\begin{enumerate}[\upshape(i)]
	\item If $x \in \mathcal{X} $ is such that $\|x\|=0$, then $x=0$.
	\item $\|x+y\|\leq \|x\|+\|y\|$ for all $x, y  \in \mathcal{X}$. 
	\item $\|\lambda x\|\leq |\lambda|\|x\|$ for all $x  \in \mathcal{X}$ and for all $\lambda\in\mathbb{K}$ with $|\lambda|\geq 1$.
		\item $\|\lambda x\|\geq |\lambda|\|x\|$ for all $x  \in \mathcal{X}$ and for all $\lambda\in\mathbb{K}$ with $|\lambda|\leq 1$.
	\item $\mathcal{X}$ is complete w.r.t. the metric $d(x, y)\coloneqq \|x-y\|$ for all $x, y  \in \mathcal{X}$. 
\end{enumerate}
\end{definition}
Banach space frame theory which is modeled on classical Lebesgue sequence spaces \cite{KRISHNAJOHNSON} and the theory of unbounded frames for Hilbert and Banach spaces  \cite{LIULIUZHENG, CHRISTENSEN, ANTOINEBALAZS, ANTOINEBALAZS2, ANTOINETRAPANI, ANTOINECORSOTRAPANI, ANTOINESPECKBACHERTRAPANI} naturally gives the following definition.
 	\begin{definition}
 		Let $\mathcal{X}$ be a disc Banach space.    Let  $\{\tau_n\}_{n=1}^\infty$  be a collection in $\mathcal{X}$ and     $\{f_n\}_{n=1}^\infty$ be a collection of linear functionals on $\mathcal{X}$ (which may not be bounded). The pair $(\{f_n\}_{n=1}^\infty, \{\tau_n\}_{n=1}^\infty)$   is said to be a \textbf{unbounded  p-Schauder frame} ($0<p<1$) or \textbf{semi p-Schauder frame} for $\mathcal{X}$    if the following conditions  holds. 	
 		\begin{enumerate}[\upshape(i)]
	\item The map 
 			\begin{align*}
 				\theta_f: \mathcal{D}(\theta_f) \ni x \mapsto \theta_fx \coloneqq \{f_n(x)\}_{n=1}^\infty\in \ell^p(\mathbb{N})
 			\end{align*}
 			is well-defined (need not be bounded).
 			\item For every $x \in \mathcal{D}(\theta_f)$, 
 			\begin{align*}
 				x=\sum_{n=1}^{\infty}	f_n (x)\tau_n
 			\end{align*}  
 			\end{enumerate}
 	\end{definition}
 We are going to use the following important result. 
 		\begin{theorem}\cite{GARLING, HERMAN}\label{GARLING}
 		For every $0<p<1$, 
 		\begin{align*}
 			\left( \sum_{n=1}^{\infty}|a_n|\right)^p\leq  \sum_{n=1}^{\infty}|a_n|^p, \quad \forall \{a_n\}_{n=1}^\infty \in  \ell^p(\mathbb{N}).	
 		\end{align*}	
 	\end{theorem}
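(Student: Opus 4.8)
The plan is to reduce the infinite inequality to its two-term counterpart and then pass to the limit. The cornerstone is the elementary pointwise fact that for $0<p<1$ one has $t^p\ge t$ whenever $t\in[0,1]$, since raising a number in $[0,1]$ to a power smaller than $1$ only increases it. First I would establish subadditivity of the map $t\mapsto t^p$ on $[0,\infty)$: given reals $a,b\ge 0$ not both zero, set $s\coloneqq a+b>0$; then $a/s,\,b/s\in[0,1]$, so the pointwise fact yields
\begin{align*}
\left(\frac{a}{s}\right)^p+\left(\frac{b}{s}\right)^p\ge \frac{a}{s}+\frac{b}{s}=1,
\end{align*}
and multiplying through by $s^p$ gives the two-term estimate $(a+b)^p\le a^p+b^p$ (the case $a=b=0$ being trivial).

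Next I would iterate this by induction on $N$ to obtain the finite version
\begin{align*}
\left(\sum_{n=1}^{N}|a_n|\right)^p\le \sum_{n=1}^{N}|a_n|^p,
\end{align*}
with base case $N=1$ immediate, and the inductive step obtained by applying the two-term subadditivity to $\left(\sum_{n=1}^{N-1}|a_n|\right)+|a_N|$ and then invoking the induction hypothesis on the first summand.

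Finally I would let $N\to\infty$. For each $N$ the finite inequality and monotonicity of the series on the right give
\begin{align*}
\left(\sum_{n=1}^{N}|a_n|\right)^p\le \sum_{n=1}^{N}|a_n|^p\le \sum_{n=1}^{\infty}|a_n|^p,
\end{align*}
so the partial sums $S_N\coloneqq\sum_{n=1}^{N}|a_n|$ satisfy $S_N\le\left(\sum_{n=1}^{\infty}|a_n|^p\right)^{1/p}<\infty$ because $\{a_n\}_{n=1}^\infty\in\ell^p(\mathbb{N})$; being nondecreasing and bounded above, they converge to $S\coloneqq\sum_{n=1}^{\infty}|a_n|<\infty$. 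By continuity of $t\mapsto t^p$ on $[0,\infty)$, passing to the limit yields $S^p\le\sum_{n=1}^{\infty}|a_n|^p$, which is exactly the asserted inequality.

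The only genuine subtlety worth flagging is the interchange of the limit with the exponentiation in the last step: this requires the partial sums $S_N$ to stay bounded, which is itself a consequence of the already-established finite inequality combined with the hypothesis $\{a_n\}_{n=1}^\infty\in\ell^p(\mathbb{N})$. Everything else is a routine induction resting on the one-line base inequality $t^p\ge t$ on $[0,1]$.
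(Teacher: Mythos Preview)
Your argument is correct: the two-term subadditivity $(a+b)^p\le a^p+b^p$ via the elementary bound $t^p\ge t$ on $[0,1]$, followed by induction and a monotone-convergence limiting step, is the standard proof of this inequality, and you have handled the boundedness of the partial sums carefully. Note, however, that the paper does not supply its own proof of this theorem; it is quoted as a known result from the cited references \cite{GARLING, HERMAN}, so there is no in-paper argument to compare against.
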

 Following is the main result of the paper.
 	\begin{theorem}\label{DISCUP}
 	 Let  $(\{f_n\}_{n=1}^\infty, \{\tau_n\}_{n=1}^\infty)$   and $(\{g_n\}_{n=1}^\infty, \{\omega_n\}_{n=1}^\infty)$     be	unbounded p-Schauder frames  for a disc Banach space $\mathcal{X}$. Then for every $x \in (\mathcal{D}(\theta_f)\cap \mathcal{D}(\theta_g))\setminus\{0\}$,  we have 
 	\begin{align*}
 		\|\theta_f x\|_0\|\theta_g x\|_0 \geq 	\frac{1}{\left(\displaystyle\sup_{n,m \in \mathbb{N} }|f_n(\omega_m)|\right)^p\left(\displaystyle\sup_{n, m \in \mathbb{N}}|g_m(\tau_n)|\right)^p}.
 \end{align*}
 	\end{theorem}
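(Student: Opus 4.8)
The plan is to run both reconstruction formulas against the functionals of the \emph{other} frame and then, crucially, to replace the H\"older/Cauchy--Schwarz step that produces Theorems \ref{MT} and \ref{UUP} by the subadditivity of $t\mapsto t^p$ on $[0,\infty)$ for $0<p<1$, i.e.\ by Theorem \ref{GARLING}; this single substitution is what makes the exponents collapse and the bound look ``unexpected''. Two harmless reductions come first. If $\|\theta_f x\|_0=\infty$ or $\|\theta_g x\|_0=\infty$ the asserted inequality is trivial, so I would assume both supports $S_f\coloneqq\operatorname{supp}(\theta_f x)$ and $S_g\coloneqq\operatorname{supp}(\theta_g x)$ are finite. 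Also $x\neq 0$ forces $\theta_f x\neq 0$ and $\theta_g x\neq 0$: for instance, if $g_m(x)=0$ for every $m$, then $x=\sum_m g_m(x)\omega_m=0$. Hence $\|\theta_f x\|_p=\sum_{n}|f_n(x)|^p>0$ and $\|\theta_g x\|_p>0$, and $S_f,S_g$ are nonempty.

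The core estimate is as follows. Fix $k$ and apply $f_k$ to the expansion $x=\sum_{m=1}^\infty g_m(x)\omega_m$; since $g_m(x)=0$ off $S_g$, this gives $f_k(x)=\sum_{m\in S_g} g_m(x)\,f_k(\omega_m)$. Taking moduli, raising to the power $p$, and applying Theorem \ref{GARLING} to the sequence $\{g_m(x)f_k(\omega_m)\}_{m\in S_g}$,
\begin{align*}
	|f_k(x)|^p\leq\Big(\sum_{m\in S_g}|g_m(x)|\,|f_k(\omega_m)|\Big)^p\leq\sum_{m\in S_g}|g_m(x)|^p|f_k(\omega_m)|^p\leq\Big(\sup_{n,m\in\mathbb{N}}|f_n(\omega_m)|\Big)^p\|\theta_g x\|_p .
\end{align*}
Summing over the finitely many $k\in S_f$ and noting $\sum_{k\in S_f}|f_k(x)|^p=\|\theta_f x\|_p$ yields
\begin{align*}
	\|\theta_f x\|_p\leq\Big(\sup_{n,m\in\mathbb{N}}|f_n(\omega_m)|\Big)^p\,\|\theta_f x\|_0\,\|\theta_g x\|_p .
\end{align*}
Interchanging the roles of the two frames (apply $g_m$ to $x=\sum_{n}f_n(x)\tau_n$) gives, by the identical computation,
\begin{align*}
	\|\theta_g x\|_p\leq\Big(\sup_{n,m\in\mathbb{N}}|g_m(\tau_n)|\Big)^p\,\|\theta_g x\|_0\,\|\theta_f x\|_p .
\end{align*}

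To finish I would substitute the second inequality into the first and cancel the strictly positive number $\|\theta_f x\|_p$, obtaining $1\leq\big(\sup_{n,m}|f_n(\omega_m)|\big)^p\big(\sup_{n,m}|g_m(\tau_n)|\big)^p\,\|\theta_f x\|_0\,\|\theta_g x\|_0$, which is exactly the claim. The one step that genuinely needs justification — and where I expect the real work to sit — is the identity $f_k\big(\sum_{m}g_m(x)\omega_m\big)=\sum_{m}g_m(x)f_k(\omega_m)$: the $f_k$ are permitted to be unbounded, so this cannot be read off from continuity on $\mathcal{X}$ and must instead be extracted from the precise sense in which the defining series $x=\sum_m g_m(x)\omega_m$ converges and from the compatibility of the $f_k$ with that convergence on $\mathcal{D}(\theta_f)\cap\mathcal{D}(\theta_g)$. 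Once that interchange is granted, the rest is only the two reductions above together with a single application of Theorem \ref{GARLING}.
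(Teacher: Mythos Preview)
Your argument is essentially the paper's: derive the two one-sided inequalities $\|\theta_f x\|_p\leq(\sup_{n,m}|f_n(\omega_m)|)^p\,\|\theta_f x\|_0\,\|\theta_g x\|_p$ and its symmetric partner via Theorem~\ref{GARLING}, then combine and cancel (the paper multiplies the two and cancels $\|\theta_f x\|_p\|\theta_g x\|_p$, you substitute one into the other and cancel $\|\theta_f x\|_p$, which comes to the same thing). Your concern about the interchange $f_k\big(\sum_m g_m(x)\omega_m\big)=\sum_m g_m(x)f_k(\omega_m)$ is in fact already resolved by your own reduction to finite $S_g$, since linearity alone pushes an arbitrary linear functional through a finite sum; the paper performs this interchange without comment.
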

 	\begin{proof}
 		Let $x \in \mathcal{D}(\theta_f)\setminus\{0\}$. Then using Theorem \ref{GARLING}, 	
 		\begin{align*}
 \|\theta_fx\|&= \sum_{n=1}^{\infty}|f_n(x)|^p=\sum_{n \in \operatorname{supp}(\theta_fx)}|f_n(x)|^p=\sum_{n \in \operatorname{supp}(\theta_fx)}\left|f_n\left(\sum_{m=1}^{\infty}	g_m (x)\omega_m\right)\right|^p\\
 &=\sum_{n \in \operatorname{supp}(\theta_fx)}\left|\sum_{m \in \operatorname{supp}(\theta_gx)}g_m (x)f_n(\omega_m)\right|^p\leq \sum_{n \in \operatorname{supp}(\theta_fx)}\left(\sum_{m \in  \operatorname{supp}(\theta_gx)}|g_m (x)f_n(\omega_m)|\right)^p\\
 &\leq \left(\displaystyle\sup_{n,m \in \mathbb{N} }|f_n(\omega_m)|\right)^p\sum_{n \in \operatorname{supp}(\theta_fx)}\left(\sum_{m \in \operatorname{supp}(\theta_gx)}|g_m (x)|\right)^p\\
 			&=\left(\displaystyle\sup_{n,m \in \mathbb{N} }|f_n(\omega_m)|\right)^p\|\theta_fx\|_0\left(\sum_{m=1}^{\infty}|g_m (x)|\right)^p\leq \left(\displaystyle\sup_{n,m \in \mathbb{N} }|f_n(\omega_m)|\right)^p\|\theta_fx\|_0\left(\sum_{m=1}^{\infty}|g_m (x)|^p\right)\\
 			&=\left(\displaystyle\sup_{n,m \in \mathbb{N} }|f_n(\omega_m)|\right)^p\|\theta_fx\|_0\|\theta_gx\|.
 		\end{align*}
 		Therefore 
 		\begin{align}\label{FI}
 			\frac{1}{\left(\displaystyle\sup_{n,m \in \mathbb{N} }|f_n(\omega_m)|\right)^p}\|\theta_fx\|\leq \|\theta_fx\|_0	\|\theta_gx\|.
 		\end{align}
 	On the other hand, let  $x \in \mathcal{D}(\theta_g)\setminus\{0\}$. Then again using Theorem \ref{GARLING}, 	
 	\begin{align*}
 		\|\theta_gx\|&= \sum_{m=1}^{\infty}|g_m(x)|^p=\sum_{m \in \operatorname{supp}(\theta_gx)}|g_m(x)|^p=\sum_{m \in \operatorname{supp}(\theta_fx)}\left|g_m\left(\sum_{n=1}^{\infty}	f_n(x)\tau_n\right)\right|^p\\
 		&=\sum_{m \in \operatorname{supp}(\theta_gx)}\left|\sum_{n \in \operatorname{supp}(\theta_fx)}f_n(x)g_m(\tau_n)\right|^p\leq \sum_{m \in  \operatorname{supp}(\theta_gx)}\left(\sum_{n \in  \operatorname{supp}(\theta_fx)}|f_n(x) (x)g_m(\tau_n)|\right)^p\\
 		&\leq \left(\displaystyle\sup_{n,m \in \mathbb{N} }|g_m(\tau_n)|\right)^p\sum_{m \in \operatorname{supp}(\theta_gx)}\left(\sum_{n \in \operatorname{supp}(\theta_fx)}|f_n (x)|\right)^p\\
 		&=\left(\displaystyle\sup_{n,m \in \mathbb{N} }|g_m(\tau_n)|\right)^p\|\theta_gx\|_0\left(\sum_{n=1}^{\infty}|f_n (x)|\right)^p\leq \left(\displaystyle\sup_{n,m \in \mathbb{N} }|g_m(\tau_n)|\right)^p\|\theta_gx\|_0\left(\sum_{n=1}^{\infty}|f_n (x)|^p\right)\\
 		&=\left(\displaystyle\sup_{n,m \in \mathbb{N} }|g_m(\tau_n)|\right)^p\|\theta_gx\|_0\|\theta_fx\|.
 	\end{align*}	
 Therefore 
 \begin{align}\label{SI}
\frac{1}{\left(\displaystyle\sup_{n,m \in \mathbb{N} }|g_m(\tau_n)|\right)^p}\|\theta_gx\|\leq \|\theta_gx\|_0	\|\theta_fx\|. 	
 \end{align}
 	Multiplying Inequalities (\ref{FI}) and (\ref{SI}) we get 
 	\begin{align*}
 \frac{1}{\left(\displaystyle\sup_{n,m \in \mathbb{N} }|f_n(\omega_m)|\right)^p\left(\displaystyle\sup_{n, m \in \mathbb{N}}|g_m(\tau_n)|\right)^p} \|\theta_fx\|	\|\theta_gx\|\leq &\|\theta_fx\|_0\|\theta_gx\|_0\|\theta_fx\|\|\theta_gx\|, \\
 		& \quad \forall x \in (\mathcal{D}(\theta_f)\cap \mathcal{D}(\theta_g))\setminus\{0\}.
 	\end{align*}
 	A cancellation of $\|\theta_fx\|\|\theta_gx\|$ gives the  inequality.	
 		\end{proof}
 		As continuous version of  Theorem \ref{GARLING}  fails (even for finite measure spaces) it seems  that continuous version of  Theorem \ref{DISCUP} fails.\\
 		In view of Tao's uncertainty principle \cite{TAO} we believe that Theorem \ref{GARLING} can be improved in prime dimensions.

 \bibliographystyle{plain}
 \bibliography{reference.bib}

\end{document}